\theoremstyle{plain}
\newtheorem*{acknowledgement}{Acknowledgement}
\newtheorem{lemma}{\bf Lemma}
\newtheorem{remark}{Remark}
\newtheorem{theorem}{\bf Theorem}
\newcommand{\ric}{\mathring{Ric}}
\newcommand{\rc}{\mathring{R}}
\renewcommand{\div}{div}
\numberwithin{equation}{section}
\begin{document}
\title[Critical metrics of the volume functional]{Volume functional of compact $4$-manifolds with a prescribed boundary metric}

\author{H. Baltazar}
\author{R. Di\'ogenes}
 \author{E. Ribeiro Jr}

\address[H. Baltazar]{Universidade Federal do Piau\'{\i} - UFPI, Departamento de Matem\'{a}tica, 64049-550, Te\-re\-si\-na - PI, Brazil.}
\email{halyson@ufpi.edu.br}

\address[R. Di\'ogenes]{UNILAB, Instituto de Ci\^encias Exatas e da Natureza, 62785-000, Acarape - CE, Brazil.}
\email{rafaeldiogenes@unilab.edu.br}

\address[E. Ribeiro Jr]{Universidade Federal do Cear\'a - UFC, Departamento  de Matem\'atica, Campus do Pici, Av. Humberto Monte, Bloco 914,
60455-760, Fortaleza - CE, Brazil.}
\email{ernani@mat.ufc.br}

\thanks{H. Baltazar was partially supported by CNPq/Brazil and FAPEPI/Brazil}

\thanks{E. Ribeiro was partially supported by CNPq/Brazil [Grant: 305410/2018-0 and 160002/2019-2], PRONEX - FUNCAP /CNPq/ Brazil and CAPES/ Brazil - Finance Code 001}

\subjclass[2010]{Primary 53C25, 53C20, 53C21; Secondary 53C65}
\keywords{volume functional; critical metrics; compact manifolds; boundary}
\date{April 4, 2019}

\begin{abstract}
We prove that a critical metric of the volume functional on a $4$-dimensional compact manifold with boundary satisfying a second-order vani\-shing condition on the Weyl tensor must be isometric to a geodesic ball in a simply connected space form $\mathbb{R}^{4}$, $\mathbb{H}^{4}$ or $\mathbb{S}^{4}.$ Moreover, we provide an integral curvature estimate involving the Yamabe constant for critical metrics of the volume functional, which allows us to get a rigidity result for such critical metrics. 
\end{abstract}

\maketitle

\section{Introduction}
\label{intro}

Let $(M^{n},\,g)$ be a connected compact Riemannian manifold with dimension $n$ at least three. Following the terminology adopted by Corvino, Eichmair and Miao \cite{CEM} as well as Miao and Tam \cite{miaotam}, we say that $g$ is a {\it $V$-static metric} if there is a smooth function $f$ on $M^n$ and a constant $\kappa$ satisfying the $V$-static equation
\begin{equation}
\label{eqVstatic} \mathfrak{L}_{g}^{*}(f)=-(\Delta f)g+Hess\, f-fRic=\kappa g,
\end{equation} where $\mathfrak{L}_{g}^{*}$ is the formal $L^{2}$-adjoint of the linearization of the scalar curvature ope\-rator $\mathfrak{L}_{g},$ which plays a fundamental role in problems related to prescribing the scalar curvature function. Here, $Ric,$ $\Delta$ and $Hess$ stand, respectively, for the Ricci tensor, the Laplacian operator and the Hessian form on $(M^n,\,g).$ Such a function $f$ is called a {\it $V$-static potential}. 

In the work \cite{CEM}, Corvino, Eichmair and Miao proved that $V$-static metrics arises from the modified problem of finding stationary points for the volume functional on the space of metrics whose scalar curvature is equal to a given constant and such metrics are useful as an attempt to better understand the interplay between scalar curvature and volume. In this context, Corvino, Eichmair and Miao \cite{CEM} were able to show that when the metric $g$ does not admit non-trivial solution to Eq. (\ref{eqVstatic}), then one can achieve simultaneously a prescribed perturbation of the scalar curvature that is compactly supported in a bounded domain $\Omega$ and a prescribed perturbation of the volume by a small deformation of the metric in $\overline{\Omega}.$ It is worth to point out that a Riemannian manifold $(M^{n},\,g)$ for which there exists a nontrivial function $f$ satisfying (\ref{eqVstatic}) must have constant scalar curvature $R$ (cf. Proposition 2.1 in \cite{CEM} and Theorem 7 in \cite{miaotam}).

The case where $\kappa\neq 0$ in (\ref{eqVstatic}) and the zero-set of $f$ is the boundary $\partial M$ was studied by Miao and Tam \cite{miaotam}. To be precise, Miao and Tam \cite{miaotam} showed that these critical metrics arise as critical points of the volume functional on $M^n$ when restricted to the class of metrics $g$ with prescribed constant scalar curvature such that $g_{|_{T \partial M}}=h$ for a prescribed Riemannian metric $h$  on the boundary. In this setting, a {\it Miao-Tam critical metric} is a 3-tuple $(M^n,\,g,\,f),$ where $(M^{n},\,g)$ is a compact Riemannian manifold of dimension at least three with a smooth boundary $\partial M$ and $f: M^{n}\to \Bbb{R}$ is a smooth function such that $f^{-1}(0)=\partial M$ satisfying the overdetermined-elliptic system
\begin{equation}
\label{eqMiaoTam} \mathfrak{L}_{g}^{*}(f)=-(\Delta f)g+Hess\, f-fRic=g.
\end{equation} For more details on such a subject, we refer the reader to \cite{BalRi,BLF,BDR,BDRR,CEM,miaotam,miaotamTAMS,yuan}.

Some explicit examples of Miao-Tam critical metrics were built on connected domain with compact closure in $\Bbb{R}^n,$ $\Bbb{H}^n$ and $\Bbb{S}^n$ (see \cite{miaotam}). Moreover, some results obtained in \cite{miaotam} suggest that critical metrics with a prescribed boundary metric seem to be rather rigid. From this perspective, it is natural to ask whether these quoted examples are the only Miao-Tam critical metrics.

In order to motivate our main results, we now briefly recall a few relevant partial answers to this question under vanishing conditions involving zero, first, or specific second order derivatives of the Weyl tensor $W,$ which is defined by the following decomposition formula
\begin{eqnarray}
R_{ijkl}&=&W_{ijkl}+\frac{1}{n-2}\big(R_{ik}g_{jl}+R_{jl}g_{ik}-R_{il}g_{jk}-R_{jk}g_{il}\big) \nonumber\\
 &&-\frac{R}{(n-1)(n-2)}\big(g_{jl}g_{ik}-g_{il}g_{jk}\big),
\end{eqnarray} where $R_{ijkl}$ stands for the Riemann curvature operator. Thereby, the Ricci curvature is given by $R_{kl}=g^{ij}R_{ikjl}$ and the scalar curvature is $R=g^{kl}R_{kl}.$ Throu\-ghout the paper the Einstein convention of summing over the repeated indices will be adopted. Inspired by ideas developed by Kobayashi \cite{kobayashi}, Kobayashi and Obata \cite{obata}, Miao and Tam \cite{miaotamTAMS} proved that a locally conformally flat  (i.e. $W=0$ for $n\ge 4$ and $C=0$ in dimension $3,$ where $C$ is the Cotton tensor) simply connected, compact Miao-Tam critical metric $(M^{n},\,g,\,f)$ with boundary isometric to a standard sphere $\mathbb{S}^{n-1}$ must be isometric to a geodesic ball in a simply connected space form $\mathbb{R}^{n}$, $\mathbb{H}^{n}$ or $\mathbb{S}^{n}.$ Afterward, motivated by \cite{CaoChen}, Barros, Di\'{o}genes and Ribeiro \cite{BDR} proved that a Bach-flat (i.e. $B_{ij}=0,$ where $B_{ij}$ is given by (\ref{bach})) simply connected, $4$-dimensional compact Miao-Tam critical metric with boundary isometric to a standard sphere $\mathbb{S}^{3}$ must be isometric to a geodesic ball in a simply connected space form $\mathbb{R}^{4}$, $\mathbb{H}^{4}$ or $\mathbb{S}^{4}.$ The Bach-flat condition can be seen as a vanishing condition involving second and zero order terms in the Weyl tensor. Recently, Kim and Shin \cite{Kim} showed that  a simply connected, compact Miao-Tam critical metric with harmonic curvature and boundary isometric to a standard sphere $\mathbb{S}^{3}$ must be isometric to a geodesic ball in a simply connected space form $\mathbb{R}^{4}$, $\mathbb{H}^{4}$ or $\mathbb{S}^{4}.$ But, since a Miao-Tam critical metric has constant scalar curvature, we conclude that the assumption of harmonic curvature in the Kim-Shin result can be replaced by the harmonic Weyl tensor condition (i.e. $div(W)=\nabla_{l}W_{ikjl}=0).$ See also \cite{BalRi} and \cite{miaotamTAMS} for further related results.

Before presenting the main results, let us remark that recently Catino, Mastrolia and Monticelli \cite{catino} obtained an important classification for gradient Ricci solitons admitting a fourth-order vanishing condition on the Weyl tensor. To be precise, they showed that any $n$-dimensional $(n \ge 4)$ gradient shrinking Ricci soliton with fourth-order divergence-free Weyl tensor (i.e. $div^4(W)=\nabla_{j}\nabla_{k}\nabla_{l}\nabla_{i}W_{ijkl}=0$) is either Einstein or a finite quotient of $N^{n-k} \times \Bbb{R}^k,$ $(k > 0),$ the product of an Einstein manifold $N^{n-k}$ with the Gaussian shrinking soliton $\Bbb{R}^{k}.$

At the same time, it is well-known that dimension four displays fascinating and peculiar features, for this reason very much attention has been given to this dimension; see, for instance \cite{besse,GS,scorpan}, for more information about this specific dimension. In this paper, motivated by
 \cite{catino}, we shall classify  four-dimensional compact Miao-Tam critical metrics under the {\it second order divergence-free Weyl tensor condition} 
\begin{equation}
div^{2}(W)=\nabla_{l}\nabla_{i}W_{ijkl}=0,
\end{equation} which is clearly weaker than the locally conformally flat and harmonic Weyl tensor conditions considered in \cite{miaotamTAMS,Kim}. More precisely, we have established the following result.

\begin{theorem}\label{thmA}
Let $(M^{4},\,g,\,f)$ be a simply connected, compact Miao-Tam critical metric with second order divergence-free Weyl tensor and boundary isometric to a standard sphere $\mathbb{S}^{3}.$ Then $(M^{4},\,g)$ is isometric to a geodesic ball in a simply connected space form $\mathbb{R}^{4}$, $\mathbb{H}^{4}$ or $\mathbb{S}^{4}.$
\end{theorem}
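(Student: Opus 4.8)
The plan is to reduce the Miao--Tam system to a pair of equations for the traceless parts of the Hessian and the Ricci tensor, to extract from $\div^{2}W=0$ a purely algebraic description of the Bach tensor, and then to run a Bochner-type argument on a suitable $f$-weighted integral, using the round boundary $\mathbb{S}^{3}$ to control the boundary integrals. First I would record the standard consequences of \eqref{eqMiaoTam}: the scalar curvature $R$ is constant (Proposition 2.1 of \cite{CEM}), tracing the equation gives $\Delta f=-\tfrac{fR+4}{3}$, and subtracting the trace part yields $\hess=f\,\ric$, that is, the traceless Hessian of $f$ equals $f$ times the traceless Ricci tensor. I would also set up the boundary: $f=0$ and $\nabla f\neq 0$ on $\partial M$, so $\partial M$ is a regular level set; from $Hess\,f=(\Delta f+1)g$ restricted to $T\partial M$ one gets that $\partial M$ is totally umbilical with constant mean curvature, and by hypothesis it carries the round metric of $\mathbb{S}^{3}$, hence of constant sectional curvature. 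These facts will make each boundary term that survives the integrations by parts either drop (because it retains a factor of $f$) or collapse to a computable constant times the area of $\mathbb{S}^{3}$.

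Next, differentiating $\hess=f\,\ric$ and skew-symmetrizing produces, via the Ricci identity, a formula for $R_{ijkl}\nabla^{l}f$; decomposing the curvature tensor into its Weyl and Ricci parts and using that $R$ is constant (so that the Cotton tensor is $C_{ijk}=\nabla_{i}\ric_{jk}-\nabla_{j}\ric_{ik}$) isolates the fundamental identity
\[
W_{ijkl}\nabla^{l}f=f\,C_{ijk}+T_{ijk},
\]
where $T_{ijk}$ is algebraic in $\ric$ and $\nabla f$. In parallel I would exploit the hypothesis: in dimension four $\div W$ is proportional to $C$, so commuting covariant derivatives turns $\div^{2}W=0$ into an algebraic expression for $\div C$ — equivalently, the Bach tensor reduces to a contraction of the form $B_{ij}=c\,W_{ikjl}\ric^{\,kl}$. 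Then I would multiply the fundamental identity by $C^{ijk}$ and integrate, integrating by parts twice: every boundary term contains the factor $f$ and vanishes, while the bulk is rewritten with $\div W\sim C$ and the expression for $\div C$ just obtained. The outcome should be a weighted Derdzi\'nski-type identity, schematically
\[
\int_{M}f\,|C|^{2}\,dM=c\int_{M}f\,W_{ikjl}\,\ric^{\,ij}\ric^{\,kl}\,dM+(\text{terms vanishing on the round }\mathbb{S}^{3}\text{ boundary}).
\]

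The main obstacle is to show the right-hand side is nonpositive, i.e.\ that $C\equiv 0$ (harmonic Weyl tensor) and, with a little more work, $W\equiv 0$. The difficulty is exactly that $\div^{2}W=0$ is strictly weaker than the Bach-flat condition of \cite{BDR}: the term $\int_{M}f\,W_{ikjl}\ric^{\,ij}\ric^{\,kl}$ no longer vanishes identically and must be absorbed. I would control it using the sharp four-dimensional algebraic inequality $|W_{ikjl}T^{ij}T^{kl}|\le\sqrt{\tfrac{2}{3}}\,|W|\,|T|^{2}$ for trace-free symmetric $2$-tensors $T$ (coming from the self-dual/anti-self-dual splitting of $W$), together with a second $f$-weighted integral identity that bounds $\int_{M}f\,|W|^{2}$ (or $\int_{M}f\,|\ric|^{2}$) by the same quantities; feeding one estimate into the other should force all the weighted curvature integrals to vanish. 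Once $W\equiv 0$, simple connectedness gives that $(M^{4},g)$ is locally conformally flat, and the conclusion follows from the locally conformally flat rigidity theorem of Miao and Tam \cite{miaotamTAMS} recalled in the Introduction; if instead the argument only yields $\div W=0$, one invokes the harmonic-Weyl version of Kim and Shin \cite{Kim}. Either way, $(M^{4},g)$ is isometric to a geodesic ball in $\mathbb{R}^{4}$, $\mathbb{H}^{4}$, or $\mathbb{S}^{4}$.
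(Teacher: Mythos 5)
Your overall skeleton (derive $fC_{ijk}=T_{ijk}+W_{ijkl}\nabla_lf$, force the Cotton tensor or $\div W$ to vanish, then quote Kim--Shin \cite{Kim} or Miao--Tam \cite{miaotamTAMS}) is the right one, but the central step --- actually proving $C\equiv 0$ --- is where your proposal has a genuine gap. You propose to multiply the fundamental identity by $C^{ijk}$, integrate by parts, arrive at something like $\int_M f|C|^2\,dV_g = c\int_M fW_{ikjl}\ric^{ij}\ric^{kl}\,dV_g+\dots$, and then ``absorb'' the Weyl--Ricci term using the algebraic bound $|W_{ikjl}T^{ij}T^{kl}|\le\sqrt{2/3}\,|W||T|^2$ together with an unspecified second weighted identity. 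There is no mechanism for this absorption: the hypotheses ($\div^2 W=0$, boundary a round $\mathbb{S}^3$) give no smallness of $\int f|W||\ric|^2$ relative to $\int f|C|^2$, and inequalities of this type only close up under an additional integral pinching assumption --- which is exactly why the paper's Theorem \ref{thmB} (where such estimates, in the spirit of \cite{catinoAdv}, are actually used) produces only a conditional inequality, not unconditional rigidity. Likewise, your observation that $\div^2W=0$ reduces the Bach tensor to $B_{ij}=c\,W_{ikjl}\ric^{kl}$ is correct but does not feed into the Bach-flat argument of \cite{BDR}, which needs $B\equiv 0$.

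What you are missing is that the hypothesis can be used much more directly. Since the scalar curvature is constant and $n=4$, (\ref{cottonwyel}) gives $C_{ijk}=-2\nabla_lW_{ijkl}$, so $\div^2W=0$ is precisely the \emph{pointwise} statement $\nabla_iC_{ijk}=0$. The paper then proves (Lemma \ref{L2}), by integrating $f^2\nabla_iR_{jk}C_{ijk}$ by parts twice using $f|_{\partial M}=0$ (the only surviving boundary term is $\int_{\partial M}C(\nabla f,\nu,\nabla f)\,dS_g$, which vanishes because $\nu=\pm\nabla f/|\nabla f|$ and $C$ is skew in its first two indices), the identity
\begin{equation*}
\int_Mf^2|C|^2\,dV_g+\int_Mf^2\,\div^3C\,dV_g+2\int_M\div C(\nabla f,\nabla f)\,dV_g=0 .
\end{equation*}
Under $\nabla_iC_{ijk}=0$ the last two terms vanish identically, so $\int_Mf^2|C|^2=0$ and $C\equiv 0$ with no estimate on $W_{ikjl}\ric^{ij}\ric^{kl}$ at all; harmonicity of $W$ then follows and Theorem 10.3 of \cite{Kim} (where the round-$\mathbb{S}^3$ boundary and simple connectedness are used) finishes the proof. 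Note also that the conclusion you hope for, $W\equiv 0$, is stronger than what this route (or the paper) establishes; only $\div W=0$ is obtained, which is why the Kim--Shin theorem, not the locally conformally flat theorem of \cite{miaotamTAMS}, is the correct endpoint.
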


For our second main result, we now recall the Yamabe constant for Riemannian manifolds with boundary

\begin{equation}
\label{Yamabeconst}
\mathcal{Y}(M,\partial M,[g])=\inf_{\phi\not\equiv0}\frac{\int_M\left(\frac{4(n-1)}{n-2}|\nabla\phi|^2+R\phi^2\right)dV_g+2\int_{\partial M}H\phi^2dS_g}{\left(\int_M|\phi|^\frac{2n}{n-2}dV_g\right)^\frac{n-2}{n}},
\end{equation} where $H$ is the mean curvature of $\partial M$ and $\phi$ is a smooth positive function on $M^n.$ For more details, we refer the reader to \cite{Escobar} and references therein.

In dimension 4, it is known that the Yamabe invariant alone is too weak to classify a given manifold, and for this reason one requires additional conditions to obtain a classification theorem. In \cite{catinoAdv}, Catino proved that any four-dimensional compact gradient shrinking Ricci soliton satisfying some $L^{2}$-pinching condition is isometric to a quotient of the round sphere; see also \cite{Huang} for further related results. Here, by adapting the method outlined in \cite{catinoAdv} as well as \cite{HV}, we provide an integral curvature estimate involving the Yamabe constant for four-dimensional compact Miao-Tam critical metrics. More precisely, we have the following result.

\begin{theorem}
\label{thmB} Let $(M^{4},\,g,\,f)$ be a simply connected, compact Miao-Tam critical metric with positive scalar curvature. Then we have:

\begin{eqnarray}
\label{thmeq}
\mathcal{Y}(M,\partial M,[g])\Phi(M)&\le& 4\sqrt{3}\left(\int_M\left(|W|^2+|\ric|^2\right)dV_g\right)^\frac{1}{2}\Phi(M)\nonumber\\&&+ 3(3\sqrt{2}-1)\int_M|\ric|^2|\nabla f|^2dV_g,
\end{eqnarray} where $\mathcal{Y}(M,\partial M,[g])$ is given by (\ref{Yamabeconst}) and $\Phi(M)=\left(\int_Mf^4|\ric|^4dV_g\right)^\frac{1}{2}.$ Moreover, if the equality occurs in (\ref{thmeq}), then $M^4$ is isometric to a geodesic ball in $\mathbb{S}^{4}.$
\end{theorem}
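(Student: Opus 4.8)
The plan is to insert a weighted test function into the Yamabe quotient (\ref{Yamabeconst}) and then, via the structural equations of a Miao--Tam critical metric together with a Bochner argument for $|\ric|^2$, reorganize the resulting Dirichlet integral into the curvature integrals of (\ref{thmeq}). Two consequences of (\ref{eqMiaoTam}) will be used repeatedly: since $(M^4,g,f)$ has constant scalar curvature $R$, the trace of (\ref{eqMiaoTam}) gives $\Delta f=-\frac{fR+4}{3}$, and its trace-free part gives $\hess=f\ric$. If $\ric\equiv 0$ then $\Phi(M)=0$, so (\ref{thmeq}) holds with equality; in that case $(M^4,g)$ is Einstein, and since $R>0$ the Miao--Tam analysis (\cite{miaotam}) forces it to be a geodesic ball in a simply connected space form, hence in $\mathbb{S}^4$. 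So assume $\ric\not\equiv 0$ and take $\phi=f|\ric|$ in (\ref{Yamabeconst}) (replacing $f|\ric|$ by $\sqrt{f^2|\ric|^2+\varepsilon}$ and letting $\varepsilon\to 0$ if one prefers to avoid $\{\ric=0\}$). Because $f\equiv 0$ on $\partial M$ the boundary term in (\ref{Yamabeconst}) vanishes, and since $n=4$ (so $\frac{4(n-1)}{n-2}=6$, $\frac{2n}{n-2}=4$) the Yamabe inequality yields
\begin{equation*}
\mathcal{Y}(M,\partial M,[g])\,\Phi(M)\le\int_M\bigl(6\,|\nabla(f|\ric|)|^2+R\,f^2|\ric|^2\bigr)\,dV_g.
\end{equation*}

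I would then expand $|\nabla(f|\ric|)|^2=|\ric|^2|\nabla f|^2+\frac12\langle\nabla(f^2),\nabla(|\ric|^2)\rangle+f^2|\nabla|\ric||^2$ and integrate the cross term by parts; the boundary contribution vanishes because $\partial_\nu(f^2)=2f\,\partial_\nu f\equiv 0$ on $\partial M$, and $\Delta(f^2)=2f\Delta f+2|\nabla f|^2$ is supplied by the trace equation. This rewrites the right-hand side as a combination of $\int_M Rf^2|\ric|^2$, $\int_M f|\ric|^2$ and $\int_M f^2|\nabla|\ric||^2$. Next I would use Kato's inequality $|\nabla|\ric||\le|\nabla\ric|$ together with the Bochner formula for the traceless Ricci tensor of a constant-scalar-curvature four-manifold (compare \cite{catinoAdv}), which expresses $\frac12\Delta|\ric|^2-|\nabla\ric|^2$ as a universal linear combination of $W_{ikjl}\mathring R^{ij}\mathring R^{kl}$, $\mathrm{tr}(\ric^3):=\mathring R_{ij}\mathring R^{j}{}_{k}\mathring R^{ki}$ and $R\,|\ric|^2$; multiplying it by $f^2$ and integrating by parts (all boundary terms again vanish since $f\equiv 0$, $\partial_\nu(f^2)\equiv 0$ on $\partial M$), the weight $f^2$ together with the trace equation is arranged so that the pure $\int_M Rf^2|\ric|^2$ and $\int_M f|\ric|^2$ contributions cancel. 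What survives is controlled by $\int_M|\ric|^2|\nabla f|^2$, $\int_M f^2\,W_{ikjl}\mathring R^{ij}\mathring R^{kl}$ and the cubic term $\int_M f^2\,\mathrm{tr}(\ric^3)$.

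The final step deals with the cubic term using $\hess=f\ric$: since $f\,\mathring R^{ki}=\nabla^k\nabla^i f-\frac{\Delta f}{4}g^{ki}$, integrating $\int_M f^2\,\mathrm{tr}(\ric^3)$ by parts re-expresses it through $\int_M|\ric|^2|\nabla f|^2$ (via $\mathring R_{ik}\mathring R^{k}{}_{j}\nabla^i f\nabla^j f\le|\ric|^2|\nabla f|^2$), through further $\int_M f^2\,W_{ikjl}\mathring R^{ij}\mathring R^{kl}$ and $\int_M f^2|\ric|^3$ terms that re-enter the same family after commuting covariant derivatives and using $\div\ric=0$, and through lower-order terms handled by the trace equation. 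Splitting the cubic term between this identity and the pointwise bound by $|\ric|^3$, with an optimized weight, produces the constant $3\sqrt2-1$. To conclude I would apply the sharp dimension-four algebraic inequalities $|W_{ikjl}\mathring R^{ij}\mathring R^{kl}|\le\frac{1}{\sqrt3}|W||\ric|^2$ and $|\mathrm{tr}(\ric^3)|\le\frac{1}{\sqrt3}|\ric|^3$ (the latter being $|\mathrm{tr}(A^3)|\le\frac{1}{\sqrt3}|A|^3$ for a traceless symmetric $4\times 4$ matrix $A$), together with the Cauchy--Schwarz inequality $\int_M f^2|\ric|^2Z\,dV_g\le\Phi(M)\bigl(\int_M Z^2\,dV_g\bigr)^{1/2}$ with $Z$ a suitable multiple of $\sqrt{|W|^2+|\ric|^2}$, arriving at (\ref{thmeq}). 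I expect the \emph{main obstacle} to be precisely this constant bookkeeping: one must check that no term of the wrong sign survives the integrations by parts and that, after the Young and Cauchy--Schwarz steps, the constants collapse to exactly $4\sqrt3$ and $3(3\sqrt2-1)$.

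For the equality statement, equality in (\ref{thmeq}) forces equality in every step above: $f|\ric|$ realizes the Yamabe infimum, Kato's inequality and the two algebraic inequalities are saturated pointwise (so $\ric$ has, at each point, eigenvalues proportional to $(3,-1,-1,-1)$ and $W$ acts extremally on $\ric\otimes\ric$), and the Young and Cauchy--Schwarz steps are equalities. Propagating this rigidity through the Miao--Tam equations (in particular $\hess=f\ric$) forces $W\equiv 0$, so $(M^4,g)$ is locally conformally flat; the classification of simply connected, locally conformally flat Miao--Tam critical metrics (\cite{miaotamTAMS}) then identifies $(M^4,g)$ with a geodesic ball in a simply connected space form, which being Einstein is consistent only with $\ric\equiv 0$ (so the equality reduces to the trivial case treated above), and the hypothesis $R>0$ selects the geodesic ball in $\mathbb{S}^4$.
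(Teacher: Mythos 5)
Your outline does share the paper's skeleton (test function $\phi=f|\ric|$ in the Yamabe quotient, the expansion of $|\nabla(f|\ric|)|^2$ with integration by parts, Kato's inequality, and a weighted Bochner-type identity for $|\ric|^2$), but it has a genuine gap precisely where you flagged it: the constants are not, and as proposed cannot be, obtained. The paper's proof hinges on two specific estimates you do not use. First, the weighted identity (Lemma \ref{lemB1}, which comes from the Baltazar--Ribeiro formula for $V$-static metrics via the relation $fC_{ijk}=T_{ijk}+W_{ijkl}\nabla_lf$) produces the curvature terms exactly in the combination $-2\int_Mf^2\big(W_{ijkl}\rc_{ik}\rc_{jl}-\tfrac{2}{n-2}\rc_{ij}\rc_{jk}\rc_{ki}\big)dV_g$, and this combination is estimated by Catino's \emph{joint} inequality $\big|-W_{ijkl}\rc_{ik}\rc_{jl}+\tfrac{2}{n-2}\rc_{ij}\rc_{jk}\rc_{ik}\big|\le\sqrt{\tfrac{n-2}{2(n-1)}}\big(|W|^2+\tfrac{8}{n(n-2)}|\ric|^2\big)^{1/2}|\ric|^2$; for $n=4$ this, followed by H\"older, is what yields $4\sqrt3\big(\int_M(|W|^2+|\ric|^2)dV_g\big)^{1/2}$. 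Your route of bounding $|W_{ikjl}\rc^{ij}\rc^{kl}|\le\tfrac{1}{\sqrt3}|W||\ric|^2$ and $|{\rm tr}(\ric^3)|\le\tfrac{1}{\sqrt3}|\ric|^3$ separately and recombining loses a factor of $\sqrt2$ (since $\tfrac{1}{\sqrt3}(|W|+|\ric|)\le\sqrt{\tfrac23}(|W|^2+|\ric|^2)^{1/2}$), giving $4\sqrt6$ rather than $4\sqrt3$, i.e.\ a strictly weaker inequality that does not imply (\ref{thmeq}); the ``optimized splitting'' of the cubic term by integration by parts that is supposed to repair this is never carried out, and nothing in the sketch shows it closes the gap. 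Second, the coefficient $3(3\sqrt2-1)$ arises from the term $-2\int_M|\ric(\nabla f)|^2dV_g$ produced by Lemma \ref{lemB1} combined with the pointwise estimate $|\ric(\nabla f)|^2\le\tfrac{(n-1)\sqrt{2n}}{2n}|\ric|^2|\nabla f|^2$, which you never state; your bound $\rc_{ik}\rc_{kj}\nabla_if\nabla_jf\le|\ric|^2|\nabla f|^2$ enters a different, unspecified bookkeeping.

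The equality case also misses the actual mechanism. In the paper, (\ref{thmeq}) is obtained from (\ref{desn4}) after discarding the nonnegative term $\tfrac{R}{6}\int_Mf^2|\ric|^2dV_g$ (here $R>0$), so equality forces $\int_Mf^2|\ric|^2dV_g=0$; analyticity of $f$ and $g$ then gives $\ric\equiv0$, i.e.\ $M^4$ is Einstein, and Theorem 1.1 of \cite{miaotamTAMS} identifies it with a geodesic ball in $\mathbb{S}^4$. Your plan instead propagates saturation of the Yamabe infimum, of Kato's inequality and of the algebraic inequalities to conclude $W\equiv0$ and then invokes the locally conformally flat classification; none of these implications is justified as stated (saturation of the matrix inequalities only constrains the eigenvalue type of $\ric$ where it is nonzero, and no argument is given that $W\equiv0$ follows), so even granting the inequality, the rigidity part remains open in your write-up.
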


The article is organized as follows. In Section \ref{secThm1}, we review some classical tensors and basic facts. Moreover, we present the proof of Theorem \ref{thmA}. In Section \ref{secThmB}, we prove a couple of key lemmas and we present the proof of Theorem \ref{thmB}.

\section{Rigidity Result}
\label{secThm1}

In this section we will present the proof of Theorem \ref{thmA}. Before to do so, we need recall some special tensors in the study of curvature for a Riemannian manifold $(M^n,\,g),\,n\ge 3.$  The first one is the Weyl tensor $W$ which is defined by the following decomposition formula
\begin{eqnarray}
\label{weyl}
R_{ijkl}&=&W_{ijkl}+\frac{1}{n-2}\big(R_{ik}g_{jl}+R_{jl}g_{ik}-R_{il}g_{jk}-R_{jk}g_{il}\big) \nonumber\\
 &&-\frac{R}{(n-1)(n-2)}\big(g_{jl}g_{ik}-g_{il}g_{jk}\big),
\end{eqnarray} where $R_{ijkl}$ stands for the Riemann curvature operator. The second one is the Cotton tensor $C$ given by
\begin{equation}
\label{cotton} \displaystyle{C_{ijk}=\nabla_{i}R_{jk}-\nabla_{j}R_{ik}-\frac{1}{2(n-1)}\big(\nabla_{i}R
g_{jk}-\nabla_{j}R g_{ik}).}
\end{equation} Using the Bianchi identity, one easily verifies that the Cotton
and Weyl tensors are related as follows:
\begin{equation}
\label{cottonwyel} \displaystyle{C_{ijk}=-\frac{(n-2)}{(n-3)}\nabla_{l}W_{ijkl},}
\end{equation} provided $n\ge 4.$ Moreover, it is easy to see that
\begin{equation}\label{simetC}
C_{ijk}=-C_{jik} \,\,\,\,\,\,\hbox{and}\,\,\,\,\,\,C_{ijk}+C_{jki}+C_{kij}=0.
\end{equation} In particular, we have
\begin{equation}\label{freetraceC}
g^{ij}C_{ijk}=g^{ik}C_{ijk}=g^{jk}C_{ijk}=0.
\end{equation} Next, the Schouten tensor $A$ is defined by
\begin{equation}
\label{schouten} A_{ij}=R_{ij}-\frac{R}{2(n-1)}g_{ij}.
\end{equation} Combining Eqs. (\ref{weyl}) and (\ref{schouten}) we have the following decomposition
\begin{equation}
\label{WS} R_{ijkl}=\frac{1}{n-2}(A\odot g)_{ijkl}+W_{ijkl},
\end{equation} where $\odot$ is the Kulkarni-Nomizu product.  Further, we recall that the Bach tensor on a Riemannian manifold $(M^n,g)$, $n\geq 4,$ is defined in term of the components of the Weyl tensor $W_{ikjl}$ as follows
\begin{equation}
\label{bach} B_{ij}=\frac{1}{n-3}\nabla_{k}\nabla_{l}W_{ikjl}+\frac{1}{n-2}R_{kl}W_{ikjl},
\end{equation}
while for $n=3$ it is given by
\begin{equation}
\label{bach3} B_{ij}=\nabla_{k}C_{kij}=div\,(C)_{ij}.
\end{equation} For more details about these tensors, we refer to \cite{bach,besse}. We say that $(M^n,g)$ is Bach-flat when $B_{ij}=0.$ It is easy to check that if  $(M^n,g)$ is either locally conformally flat or Einstein, then $(M^n,g)$ is Bach-flat. 

Next, we remember that a Riemannian manifold $(M^n,g)$ is a $V$-static metric if there exists a smooth function $f$ and a constant $\kappa$ such that
\begin{equation}
\label{defVstatic}
-(\Delta f)g+Hess\, f-f Ric=\kappa g.
\end{equation} In particular, tracing (\ref{defVstatic}) we deduce that the $V$-static potential $f$ also satisfies the linear equation
\begin{equation}
\label{eqtrace}
-(n-1)\Delta f=Rf+\kappa n.
\end{equation} From this, it is easy to check that

\begin{equation}
\label{eqVstaic2}Hess\,  f-fRic=-\frac{Rf+\kappa}{n-1}g
\end{equation}
and
\begin{equation}
\label{p1a}
f\mathring{Ric}=\mathring{Hess\,f},
\end{equation} where $\mathring{T}=T-\frac{{\rm tr}T}{n}g$ stands for the traceless of tensor T. 

\begin{remark} It is also important to recall that choosing appropriate coordinates, $f$ and $g$ are analytic, and hence, the set of regular points of $f$ is dense in $M^n.$ Thus, at regular points of $f,$ the vector field $\nu=-\frac{\nabla f}{|\nabla f|}$ is normal to $\partial M$ and $|\nabla f|$ is constant (non null) along $\partial M$. This is a consequence of an ODE argument by Corvino, Eichmair and Miao (see Proposition 2.1 in \cite{CEM}).
\end{remark}

\vspace{0,5cm}

Now we ready to proof Theorem \ref{thmA}.

\subsection{Proof of Theorem \ref{thmA}}

First of all, we shall present a crucial integral formula.

\begin{lemma}\label{L2}
Let $(M^{n},g,f)$ be a Miao-Tam critical metric. Then we have:
\begin{eqnarray*}
\int_{M}f^{2}|C|^{2}dV_{g}+\int_{M} f^{2}\div^{3}(C)dV_{g}+2\int_{M}\div\,C(\nabla f,\nabla f)dV_{g}=0.
\end{eqnarray*}
\end{lemma}
\begin{proof} Taking into account that $M^n$ has constant scalar curvature, we use (\ref{cotton}) together with (\ref{simetC}) to infer
\begin{eqnarray}
\label{mn1}
\int_{M} f^{2}|C|^{2}dV_{g}&=&\int_{M} f^{2}(\nabla_{i}R_{jk}-\nabla_{j}R_{ik})C_{ijk} dV_{g}\nonumber\\
&=&2\int_{M} f^{2}\nabla_{i} R_{jk}C_{ijk}dV_{g}.
\end{eqnarray} 

Next, by using the Stokes's formula and that $f$ vanishes on the boundary we derive
\begin{eqnarray*}
0&=&\int_{M}\nabla_{i}\big(f^{2}R_{jk}C_{ijk}\big)dV_g\nonumber\\ &=&\int_{M}\nabla_{i}(f^{2})R_{jk}C_{ijk}dV_g+\int_{M}f^{2}\nabla_{i}R_{jk}C_{ijk}dV_g \nonumber\\&&+\int_{M}f^{2}R_{jk}\nabla_{i}C_{ijk}dV_g,
\end{eqnarray*} and this substituted into (\ref{mn1}) gives

\begin{eqnarray}
\label{n1}
\int_{M} f^{2}|C|^{2}dV_{g}&=&-2\int_{M} \nabla_{i}(f^{2})R_{jk}C_{ijk}dV_{g}-2\int_{M} f^{2}R_{jk}\nabla_{i}C_{ijk}dV_{g}.
\end{eqnarray} Taking into account that the Cotton tensor is trace-free in any two indices, we use the fundamental equation (\ref{eqMiaoTam}) to obtain
\begin{equation}
\label{nb1}
\int_{M} f^{2}|C|^{2}dV_{g}=-4\int_{M}\nabla_{i}f\nabla_{j}\nabla_{k}fC_{ijk}dV_{g}-2\int_{M} f\nabla_{j}\nabla_{k}f\nabla_{i}C_{ijk}dV_{g}.
\end{equation}  

One notices that

\begin{equation}
\label{nb2}
\nabla_{j}\Big(\nabla_{i}f \nabla_{k}f C_{ijk}\Big)=\nabla_{j}\nabla_{i}f\nabla_{k}f C_{ijk}+\nabla_{i}f\nabla_{j}\nabla_{k}f C_{ijk}-\div\, C(\nabla f, \nabla f).
\end{equation} But, by using once more that the Cotton is skew-symmetric in the first two indices, it is easy to check, using the fact that the Hessian is symmetric, that $\nabla_{j}\nabla_{i}f\nabla_{k}f C_{ijk}=-\nabla_{j}\nabla_{i}f\nabla_{k}f C_{ijk},$ and hence, $\nabla_{j}\nabla_{i}f\nabla_{k}f C_{ijk}=0.$ Therefore, on integrating (\ref{nb2}) over $M^n,$ we may use this jointly with Stokes's formula to arrive at

\begin{eqnarray}
\label{nb3}
\int_{M}\nabla_{i}f\nabla_{j}\nabla_{k}f C_{ijk} dV_{g}&=& \int_{M}\div\, C(\nabla f, \nabla f) dV_{g}+\int_{\partial M}C(\nabla f, \nu,\nabla f)dS_g\nonumber\\&=&\int_{M}\div\, C(\nabla f, \nabla f) dV_{g},
\end{eqnarray} where $\nu=\mp\frac{\nabla f}{|\nabla f|},$ according to whether the potential is positive, respectively nega\-tive, on $M.$ Moreover, we used in the last line the anti-symmetry of the Cotton tensor.

In the meantime, notice that
\begin{eqnarray*}
\nabla_{j}\Big(f\nabla_{k}f \nabla_{i}C_{ijk}\Big)&=&\nabla_{j}f\nabla_{k}f\nabla_{i}C_{ijk}+f\nabla_{j}\nabla_{k}f\nabla_{i}C_{ijk}\nonumber\\&&+f\nabla_{k}f\nabla_{j}\nabla_{i}C_{ijk}\nonumber\\&=&\div\, C(\nabla f,\nabla f)+f\nabla_{j}\nabla_{k}f\nabla_{i}C_{ijk}\nonumber\\&&+\frac{1}{2}\nabla_{k}(f^{2})\nabla_{j}\nabla_{i}C_{ijk}.
\end{eqnarray*} Upon integrating the above expression over $M$ we use again the Stokes's formula and that $f$ vanishes on the boundary to deduce

\begin{eqnarray*}
\int_{M}f\nabla_{j}\nabla_{k}f\nabla_{i}C_{ijk} dV_g&=&-\int_{M}\div\, C(\nabla f,\nabla f) dV_{g}-\frac{1}{2}\int_{M}\nabla_{k}(f^{2})\nabla_{j}\nabla_{i}C_{ijk}dV_{g},
\end{eqnarray*} which can be rewritten as 

\begin{eqnarray}
\label{nb4}
\int_{M}f\nabla_{j}\nabla_{k}f\nabla_{i}C_{ijk}dV_g&=&-\int_{M}\div\, C(\nabla f,\nabla f)dV_{g}+\frac{1}{2}\int_{M}f^{2}\nabla_{k}\nabla_{j}\nabla_{i}C_{ijk}dV_{g}\nonumber\\&=&-\int_{M}\div\, C(\nabla f,\nabla f)dV_{g}+\frac{1}{2}\int_{M}f^{2}\div^{3}(C) dV_{g}.
\end{eqnarray}

Now, substituting (\ref{nb3}) and (\ref{nb4}) into (\ref{nb1}) we obtain

\begin{eqnarray*}
\int_{M} f^{2}|C|^{2}dV_{g}+2\int_{M} \div\, C(\nabla f,\nabla f)dV_{g}&=&\int_{M}\nabla_{k}(f^{2})\nabla_{j}\nabla_{i}C_{ijk}dV_{g},
\end{eqnarray*} that is,

\begin{eqnarray}
\label{nm5}
\int_{M} f^{2}|C|^{2}dV_{g}+2\int_{M} \div\, C(\nabla f,\nabla f)dV_{g}+\int_{M}f^{2}\div^{3}(C) dV_{g}=0.
\end{eqnarray} This is what we wanted to prove.

\end{proof}

Proceeding, since $M^4$ has second order divergence-free Weyl tensor, we may use (\ref{nm5}) jointly with (\ref{cottonwyel}) to deduce that the Cotton tensor $C$ is identically zero in $M^4.$ This implies that the Weyl tensor is harmonic, and then it suffices to apply Theorem 10.3 in \cite{Kim} to conclude that $M^4$ is isometric to a geodesic ball in a simply connected space form $\mathbb{R}^{4}$, $\mathbb{H}^{4}$ or $\mathbb{S}^{4}.$ This finishes the proof of Theorem \ref{thmA}.

\section{Integral Curvature Estimate}
\label{secThmB}

In this section, we will prove Theorem \ref{thmB}. To do so, we first need to recall some useful results obtained in \cite{BalRi2,BDR}.

\begin{lemma}[\cite{BalRi2,BDR}]
\label{lem1} Let $(M^{n},\,g,\,f,\kappa)$ be a connected, smooth Riemannian manifold and $f$ is a smooth function on $M^n$ satisfying the $V$-static equation (\ref{eqVstatic}). Then
\begin{eqnarray*}
f(\nabla_iR_{jk}-\nabla_jR_{ik})=R_{ijkl}\nabla_lf+\frac{R}{n-1}(\nabla_i f g_{jk}-\nabla_j f g_{ik})-(\nabla_iR_{jk}-\nabla_jR_{ik}).
\end{eqnarray*}
\end{lemma}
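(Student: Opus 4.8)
The plan is to derive Lemma~\ref{lem1} directly from the $V$-static equation by differentiating the Hessian term and invoking the commutation (Ricci) identity. First I would start from the reformulation \eqref{eqVstaic2}, namely $\nabla_i\nabla_j f = f R_{ij} - \frac{Rf+\kappa}{n-1} g_{ij}$, since the scalar curvature $R$ is constant (Proposition~2.1 in \cite{CEM}) and this already isolates the Hessian. Taking a covariant derivative $\nabla_k$ of this identity and then antisymmetrizing in $j,k$ kills the pure-metric term (up to the gradient pieces it produces) and yields
\begin{equation*}
\nabla_k\nabla_i\nabla_j f - \nabla_j\nabla_i\nabla_k f = (\nabla_k f) R_{ij} - (\nabla_j f) R_{ik} + f(\nabla_k R_{ij} - \nabla_j R_{ik}) - \frac{R}{n-1}\big((\nabla_k f) g_{ij} - (\nabla_j f) g_{ik}\big).
\end{equation*}

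Next I would evaluate the left-hand side by commuting third derivatives of $f$. The standard identity $\nabla_k\nabla_i\nabla_j f - \nabla_i\nabla_k\nabla_j f = -R_{kijl}\nabla_l f$, combined with the symmetry of the Hessian, lets one rewrite $\nabla_k\nabla_i\nabla_j f - \nabla_j\nabla_i\nabla_k f$ purely in terms of the Riemann tensor contracted with $\nabla f$; the relevant contraction collapses (using the first Bianchi identity) to $R_{ijkl}\nabla_l f$ up to sign conventions. Substituting this back and relabeling indices so that the Ricci-derivative terms match the statement, one obtains
\begin{equation*}
f(\nabla_i R_{jk} - \nabla_j R_{ik}) = R_{ijkl}\nabla_l f + \frac{R}{n-1}\big((\nabla_i f) g_{jk} - (\nabla_j f) g_{ik}\big) - (\nabla_i R_{jk} - \nabla_j R_{ik}),
\end{equation*}
which is exactly the asserted formula (the final $-(\nabla_i R_{jk}-\nabla_j R_{ik})$ term arising from the $(\nabla f)R_{ij}$ and $(\nabla f)g_{ij}$ contributions after using \eqref{schouten} or simply the definition of the Cotton tensor to repackage them).

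The only genuinely delicate point is bookkeeping: one must be careful with the index placement and sign conventions in the commutation formula for third covariant derivatives of a function, and with how the constant-$R$ hypothesis is used to discard $\nabla R$ terms. There is no real analytic obstacle here — it is a purely tensorial computation — so I expect the ``hard part'' to be nothing more than choosing conventions consistently so that the Riemann-tensor term appears with the coefficient $+1$ and the remaining lower-order terms assemble into the stated combination. Once the identity is written in the form above, it can be cross-checked against the definition \eqref{cotton} of the Cotton tensor, which provides a useful consistency test.
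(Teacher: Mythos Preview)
Your overall strategy --- differentiate the rewritten $V$-static equation \eqref{eqVstaic2}, antisymmetrize, and invoke the Ricci commutation identity for third covariant derivatives of $f$ --- is exactly the standard derivation, and it is the one used in the cited references. (Note that the paper itself does not supply a proof of Lemma~\ref{lem1}; it simply quotes it from \cite{BalRi2,BDR}.) Through the point where you obtain
\[
f(\nabla_i R_{jk}-\nabla_j R_{ik})
= R_{ijkl}\nabla_l f
+\frac{R}{n-1}\big((\nabla_i f)g_{jk}-(\nabla_j f)g_{ik}\big)
-\big((\nabla_i f)R_{jk}-(\nabla_j f)R_{ik}\big),
\]
your argument is correct and complete.

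The problem is your last parenthetical, where you try to convert the residual terms $(\nabla_i f)R_{jk}-(\nabla_j f)R_{ik}$ into the printed expression $-(\nabla_i R_{jk}-\nabla_j R_{ik})$ by ``repackaging via the Schouten or Cotton tensor.'' That step cannot work: no algebraic identity turns $(\nabla f)\cdot Ric$ into $\nabla Ric$. What is actually going on is that the last term in the displayed statement of Lemma~\ref{lem1} is a typographical slip; it should read $-\big(\nabla_i f\,R_{jk}-\nabla_j f\,R_{ik}\big)$, which is precisely what your computation produces and what is consistent with \eqref{eqT}--\eqref{T} once one substitutes the Weyl decomposition \eqref{weyl}. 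So drop the Schouten/Cotton justification and simply stop at the formula above; that \emph{is} the lemma.
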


Proceeding, it is not difficult to check that Riemannian manifolds satisfying the $V$-static equation must satisfy
\begin{equation}
\label{eqT}
fC_{ijk}=T_{ijk}+W_{ijkl}\nabla_{l}f,
\end{equation} where the covariant 3-tensor $T_{ijk},$ defined previously in \cite{BDR}, is given by

\begin{eqnarray}
\label{T}
T_{ijk}&=&\frac{n-1}{n-2}\left(R_{ik}\nabla_{j}f-R_{jk}\nabla_{i}f\right)
-\frac{R}{n-2}\left(g_{ik}\nabla_{j}f-g_{jk}\nabla_{i}f\right)\nonumber\\
 &&+\frac{1}{n-2}\left(g_{ik}R_{js}\nabla^{s}f-g_{jk}R_{is}\nabla^{s}f\right).
\end{eqnarray}

Before presenting the proof of main results it is necessary to prove a couple of key lemmas. To do so, it is crucial to recall a B\"ochner type formula for $V$-static metrics obtained recently in \cite{BalRi2} (cf. Theorem 2 in \cite{BalRi2}). More precisely, Baltazar and Ribeiro proved that a connected, smooth Riemannian manifold $M^n$ and a smooth function $f$  on $M^n$ satisfying the $V$-static equation (\ref{eqVstatic}) must satisfy 

\begin{eqnarray}
\label{eqBalRib}
\frac{1}{2}\div (f\nabla|\ric|^2)&=&\frac{n-2}{n-1}f|C|^2+f|\nabla\ric|^2+\frac{n\kappa}{n-1}|\ric|^2\nonumber\\&&+\frac{2Rf}{n-1}|\ric|^2+\frac{2nf}{n-2}\rc_{ij}\rc_{jk}\rc_{ki}\nonumber\\&&-\frac{n-2}{n-1}W_{ijkl}C_{ijk}\nabla_lf-2fW_{ijkl}\rc_{ik}\rc_{jl}.
\end{eqnarray}
 
In the sequel, we shall use (\ref{eqBalRib}) to get the following lemma. 

\begin{lemma} \label{lemA1a} Let $(M^{n},\,g,\,f,\kappa)$ be a connected, smooth Riemannian manifold and $f$ is a smooth function on $M^n$ satisfying the $V$-static equation (\ref{eqVstatic}). Then we have:
\begin{eqnarray}\label{lemA1}
\frac{1}{2}\div(f^2\nabla|\ric|^2)&=&f^2|\nabla\ric|^2+\frac{n\kappa f}{n-1}|\ric|^2+\frac{2Rf^2}{n-1}|\ric|^2\nonumber\\&&+2f\nabla_i \rc_{jk}\rc_{ik}\nabla_j f-\frac{1}{2}\langle f\nabla|\ric|^2,\nabla f\rangle\nonumber\\&&-2f^2W_{ijkl}\rc_{ik}\rc_{jl}+\frac{2nf^2}{n-2}\rc_{ij}\rc_{jk}\rc_{ki}.
\end{eqnarray}
\end{lemma}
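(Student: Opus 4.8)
The plan is to deduce (\ref{lemA1}) directly from the B\"ochner-type formula (\ref{eqBalRib}) by a Leibniz expansion followed by a rewriting of the Cotton/Weyl terms. First I would write, for $u=|\ric|^{2}$,
$$\frac{1}{2}\div(f^{2}\nabla|\ric|^{2})=\frac{f}{2}\div(f\nabla|\ric|^{2})+\frac{1}{2}\langle\nabla f,\,f\nabla|\ric|^{2}\rangle,$$
and then substitute into the first summand the formula (\ref{eqBalRib}) multiplied by $f$. After this substitution the terms $f^{2}|\nabla\ric|^{2}$, $\frac{nkf}{n-1}|\ric|^{2}$, $\frac{2Rf^{2}}{n-1}|\ric|^{2}$, $\frac{2nf^{2}}{n-2}\rc_{ij}\rc_{jk}\rc_{ki}$ and $-2f^{2}W_{ijkl}\rc_{ik}\rc_{jl}$ already coincide with the corresponding terms in (\ref{lemA1}). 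Since $-\langle f\nabla|\ric|^{2},\nabla f\rangle+\tfrac12\langle\nabla f,f\nabla|\ric|^{2}\rangle=-\tfrac12\langle f\nabla|\ric|^{2},\nabla f\rangle$, it then remains only to establish the identity
$$\frac{n-2}{n-1}f^{2}|C|^{2}-\frac{n-2}{n-1}fW_{ijkl}C_{ijk}\nabla_{l}f=2f\nabla_{i}R_{jk}R_{ik}\nabla_{j}f-\langle f\nabla|\ric|^{2},\nabla f\rangle.$$

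To handle the left-hand side, I would contract the relation (\ref{eqT}), namely $fC_{ijk}=T_{ijk}+W_{ijkl}\nabla_{l}f$, with $fC_{ijk}$, obtaining $f^{2}|C|^{2}=fC_{ijk}T_{ijk}+fW_{ijkl}C_{ijk}\nabla_{l}f$, hence
$$\frac{n-2}{n-1}f^{2}|C|^{2}-\frac{n-2}{n-1}fW_{ijkl}C_{ijk}\nabla_{l}f=\frac{n-2}{n-1}fC_{ijk}T_{ijk}.$$
Next I would expand $C_{ijk}T_{ijk}$ using the definition (\ref{T}) of $T_{ijk}$: the three terms of $T_{ijk}$ carrying a factor $g_{ik}$ or $g_{jk}$ are annihilated by the trace-free identities (\ref{freetraceC}), leaving $C_{ijk}T_{ijk}=\frac{n-1}{n-2}\big(C_{ijk}R_{ik}\nabla_{j}f-C_{ijk}R_{jk}\nabla_{i}f\big)$. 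Finally, renaming indices and using the skew-symmetry $C_{ijk}=-C_{jik}$ from (\ref{simetC}) shows that the second term equals the first, so $C_{ijk}T_{ijk}=\frac{2(n-1)}{n-2}C_{ijk}R_{ik}\nabla_{j}f$ and therefore $\frac{n-2}{n-1}fC_{ijk}T_{ijk}=2fC_{ijk}R_{ik}\nabla_{j}f$.

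To finish, I would invoke the fact that a $V$-static metric has constant scalar curvature (Proposition 2.1 of \cite{CEM}, as recalled in the Introduction). Then the Cotton tensor in (\ref{cotton}) reduces to $C_{ijk}=\nabla_{i}R_{jk}-\nabla_{j}R_{ik}$, while $\nabla_{j}|\ric|^{2}=2\rc_{ik}\nabla_{j}R_{ik}=2R_{ik}\nabla_{j}R_{ik}$ (the scalar-curvature gradient drops out). Substituting these, $2fC_{ijk}R_{ik}\nabla_{j}f=2f\nabla_{i}R_{jk}R_{ik}\nabla_{j}f-2fR_{ik}\nabla_{j}R_{ik}\nabla_{j}f=2f\nabla_{i}R_{jk}R_{ik}\nabla_{j}f-\langle f\nabla|\ric|^{2},\nabla f\rangle$, which is exactly the displayed identity above. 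Plugging everything back gives (\ref{lemA1}).

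The computation is essentially bookkeeping, and the step I expect to require the most care is the contraction $C_{ijk}T_{ijk}$: one must use all three trace-free relations in (\ref{freetraceC}) together with the skew-symmetry in the first two indices, and then crucially exploit the constancy of $R$ both to reduce $C_{ijk}$ to the bare difference $\nabla_{i}R_{jk}-\nabla_{j}R_{ik}$ and to recognize $R_{ik}\nabla_{j}R_{ik}$ as $\tfrac12\nabla_{j}|\ric|^{2}$; without that hypothesis the identity would pick up extra terms involving $\nabla R$.
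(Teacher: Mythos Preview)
Your proposal is correct and follows essentially the same route as the paper's own proof: both start from the Leibniz identity $\tfrac12\div(f^{2}\nabla|\ric|^{2})=\tfrac{f}{2}\div(f\nabla|\ric|^{2})+\tfrac12\langle f\nabla|\ric|^{2},\nabla f\rangle$, substitute the B\"ochner-type formula (\ref{eqBalRib}), and then eliminate the pair of terms $\frac{n-2}{n-1}f^{2}|C|^{2}-\frac{n-2}{n-1}fW_{ijkl}C_{ijk}\nabla_{l}f$ via the relation $fC_{ijk}=T_{ijk}+W_{ijkl}\nabla_{l}f$ together with the trace-free and skew-symmetry properties of $C$ and the constancy of $R$. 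The only cosmetic difference is that the paper contracts $W_{ijkl}\nabla_{l}f=fC_{ijk}-T_{ijk}$ against $C_{ijk}$ and expands $T_{ijk}C_{ijk}$ using the explicit formulas for both tensors simultaneously, whereas you first reduce $C_{ijk}T_{ijk}$ to $\frac{2(n-1)}{n-2}C_{ijk}R_{ik}\nabla_{j}f$ using only the trace identities and skew-symmetry, and then substitute $C_{ijk}=\nabla_{i}R_{jk}-\nabla_{j}R_{ik}$; the two computations are algebraically equivalent.
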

\begin{proof} To begin with, we use Eq. (\ref{eqBalRib}) to obtain
\begin{eqnarray}
\label{eq1}
\frac{1}{2}\div(f^2\nabla|\ric|^2)&=&\frac{1}{2}f \div(f\nabla|\ric|^2)+\frac{1}{2}\langle f\nabla|\ric|^2,\nabla f\rangle\nonumber\\
 &=&f\left[\frac{n-2}{n-1}f|C|^2+f|\nabla\ric|^2+\frac{n\kappa}{n-1}|\ric|^2+\frac{2Rf}{n-1}|\ric|^2\right.\nonumber\\
 &&\left.+\frac{2nf}{n-2}\rc_{ij}\rc_{jk}\rc_{ki}-\frac{n-2}{n-1}W_{ijkl}C_{ijk}\nabla_lf-2fW_{ijkl}\rc_{ik}\rc_{jl}\right]\nonumber\\
 &&+\frac{1}{2}\langle f\nabla|\ric|^2,\nabla f\rangle\nonumber\\&=&\frac{n-2}{n-1}f^2|C|^2+f^2|\nabla\ric|^2+\frac{n\kappa f}{n-1}|\ric|^2+\frac{2Rf^2}{n-1}|\ric|^2\nonumber\\
 &&+\frac{2nf^2}{n-2}\rc_{ij}\rc_{jk}\rc_{ki}-\frac{n-2}{n-1}fW_{ijkl}C_{ijk}\nabla_lf\\
 &&-2f^2W_{ijkl}\rc_{ik}\rc_{jl}+\frac{1}{2}\langle f\nabla|\ric|^2,\nabla f\rangle\nonumber.
\end{eqnarray}

On the other hand, by using Eq. (\ref{eqT}) we have $$W_{ijkl}C_{ijk}\nabla_l f=(fC_{ijk}-T_{ijk})C_{ijk}
 =f|C|^2-T_{ijk}C_{ijk}.$$ Next, since $M^n$ has constant scalar curvature, from the Bianchi identity together with (\ref{cotton}) and (\ref{T}) we infer

\begin{eqnarray*}
 W_{ijkl}C_{ijk}\nabla_lf&=&f|C|^2-\frac{n-1}{n-2}(R_{ik}\nabla_jf-R_{jk}\nabla_if)(\nabla_iR_{jk}-\nabla_jR_{ik})\\
 &=&f|C|^2-\frac{2(n-1)}{n-2}(\nabla_iR_{jk}R_{ik}\nabla_jf-\nabla_jR_{ik}R_{ik}\nabla_jf)\\
 &=&f|C|^2-\frac{2(n-1)}{n-2}\nabla_iR_{jk}R_{ik}\nabla_jf+\frac{2(n-1)}{n-2}\nabla_jR_{ik}R_{ik}\nabla_jf.
\end{eqnarray*} From this it follows that
\begin{equation}\label{eq2}
\frac{n-2}{n-1}fW_{ijkl}C_{ijk}\nabla_l f =\frac{n-2}{n-1}f^2|C|^2-2f\nabla_iR_{jk}R_{ik}\nabla_j f+2f\nabla_jR_{ik}R_{ik}\nabla_jf.
\end{equation} Substituting (\ref{eq2}) into (\ref{eq1}) we get
\begin{eqnarray}\label{eq3}
\frac{1}{2}\div(f^2\nabla|\ric|^2)&=&\frac{n-2}{n-1}f^2|C|^2+f^2|\nabla\ric|^2+\frac{n\kappa f}{n-1}|\ric|^2+\frac{2Rf^2}{n-1}|\ric|^2\nonumber\\
 &&+\frac{2nf^2}{n-2}\rc_{ij}\rc_{jk}\rc_{ki}-\frac{n-2}{n-1}f^2|C|^2+2f\nabla_iR_{jk}R_{ik}\nabla_jf\nonumber\\
 &&-2f\nabla_jR_{ik}R_{ik}\nabla_jf-2f^2W_{ijkl}\rc_{ik}\rc_{jl}+\frac{1}{2}\langle f\nabla|\ric|^2,\nabla f\rangle\nonumber\\
 &=&f^2|\nabla\ric|^2+\frac{n\kappa f}{n-1}|\ric|^2+\frac{2Rf^2}{n-1}|\ric|^2\nonumber\\&&+2f\nabla_i \rc_{jk}\rc_{ik}\nabla_j f-\frac{1}{2}\langle f\nabla|\ric|^2,\nabla f\rangle\nonumber\\&&-2f^2W_{ijkl}\rc_{ik}\rc_{jl}+\frac{2nf^2}{n-2}\rc_{ij}\rc_{jk}\rc_{ki},
\end{eqnarray} where we used that $M^n$ has constant scalar curvature to deduce $\nabla Ric=\nabla \mathring{Ric}$ and the Bianchi identity to infer $\nabla_{j}R_{ik}\cdot g_{ik}=0.$ So, the proof is finished. 
\end{proof}

Proceeding, we shall deduce an integral formula for Miao-Tam critical metrics, which plays a fundamental role in the proof of Theorem \ref{thmB}.

\begin{lemma} \label{lemB1} 
Let $(M^{n},\,g,\,f)$ be a compact, oriented, connected Miao-Tam critical metric with smooth boundary $\partial M.$ Then we have:
\begin{eqnarray}\label{lemA1}
2\int_Mf^2W_{ijkl}\rc_{ik}\rc_{jl}dV_g&=&-2\int_M|\ric(\nabla f)|^2dV_g + \int_M f^2|\nabla\ric|^2dV_g\nonumber\\
 &&+\frac{n}{n-1}\int_Mf|\ric|^2dV_g+\frac{2R}{n-1}\int_Mf^2|\ric|^2dV_g\nonumber\\
 &&+\frac{n-4}{2n}\int_Mf\Delta f|\ric|^2dV_g+\frac{1}{2}\int_M|\ric|^2|\nabla f|^2 dV_g\nonumber\\
 &&+\frac{4}{n-2}\int_Mf^2\rc_{ij}\rc_{jk}\rc_{ki}dV_g.
\end{eqnarray}
\end{lemma}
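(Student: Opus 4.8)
The plan is to start from the pointwise identity of Lemma \ref{lemA1a}, specialized to the Miao-Tam case $\kappa = 1$, and integrate it over $M^n$. The left-hand side $\frac12\div(f^2\nabla|\ric|^2)$ will contribute only a boundary term $\frac12\int_{\partial M} f^2\langle\nabla|\ric|^2,\nu\rangle\, dS_g$, which vanishes because $f \equiv 0$ on $\partial M$ (this is the defining property $f^{-1}(0)=\partial M$). So after integration the $\div$ term disappears and we are left with an integral relation among $\int f^2|\nabla\ric|^2$, $\int f|\ric|^2$, $\int f^2|\ric|^2$, the cubic term $\int f^2\rc_{ij}\rc_{jk}\rc_{ki}$, the Weyl term $\int f^2 W_{ijkl}\rc_{ik}\rc_{jl}$, and the two ``mixed'' terms $\int f\nabla_iR_{jk}R_{ik}\nabla_jf$ and $\int\langle f\nabla|\ric|^2,\nabla f\rangle$.

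Next I would rewrite those two mixed terms in the target form. Note that $\nabla_iR_{jk} = \nabla_i\rc_{jk}$ since $R$ is constant, and $\frac12\langle\nabla|\ric|^2,\nabla f\rangle = \rc_{jk}\nabla_i\rc_{jk}\nabla_i f$, so $\int\langle f\nabla|\ric|^2,\nabla f\rangle\,dV_g = 2\int f\rc_{jk}\nabla_i\rc_{jk}\nabla_i f\, dV_g$. For the term $\int f\nabla_iR_{jk}R_{ik}\nabla_j f\, dV_g$, the idea is to integrate by parts, moving the derivative off $\rc$: writing it as $\int \nabla_i(\rc_{jk}) \rc_{ik} f\nabla_j f\, dV_g$ and integrating by parts in the $i$-index produces $-\int \rc_{jk}\nabla_i(\rc_{ik} f\nabla_j f)\,dV_g$, with no boundary term again because of $f=0$ on $\partial M$. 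Expanding the derivative and using the contracted second Bianchi identity $\nabla_i\rc_{ik} = \nabla_i R_{ik} = \frac12\nabla_k R = 0$, together with $\rc_{ik}\nabla_i\nabla_j f$ replaced via the key relation $f\rc = \mathring{Hess\,f}$, i.e. $Hess\,f = f\,Ric + \frac{\Delta f}{n}g - \frac{R f}{n}g$ (from (\ref{eqVstaic2}) and (\ref{p1a})), lets one convert the Hessian into terms quadratic and cubic in $\rc$ plus the $f\Delta f|\ric|^2$ term and the $|\ric|^2|\nabla f|^2$ term that appear on the right-hand side. One should also recognize $\rc_{ik}\nabla_i f \cdot \rc_{jk}\nabla_j f = |\ric(\nabla f)|^2$.

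Assembling everything: substitute the rewritten mixed terms back, collect the cubic-$\rc$ contributions (from the original formula and from the integration by parts) into the single coefficient $\frac{4}{n-2}$, collect the $f|\ric|^2$ and $f^2|\ric|^2$ terms into the coefficients $\frac{n}{n-1}$ and $\frac{2R}{n-1}$, and solve for $2\int f^2 W_{ijkl}\rc_{ik}\rc_{jl}\, dV_g$ to land on the stated identity. The main obstacle I expect is the bookkeeping in the integration-by-parts step for $\int f\nabla_i\rc_{jk}\,\rc_{ik}\nabla_j f$: one has to be careful about which index the Bianchi identity applies to (it kills $\nabla_i\rc_{ik}$ but \emph{not} $\nabla_j\rc_{ik}$ or $\nabla_i\rc_{jk}$), and about correctly commuting derivatives / using the Ricci identity if a curvature term is generated, so that the extra Riemann-curvature pieces reorganize — via the Weyl decomposition (\ref{weyl}) — into exactly the Weyl term and the cubic Ricci term with the right constants. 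Tracking the constants $\frac{n-4}{2n}$ and $\frac12$ on the $f\Delta f|\ric|^2$ and $|\ric|^2|\nabla f|^2$ terms is where a sign or factor-of-$n$ slip is most likely, so I would double-check those by also testing the formula on a trivial example (e.g. an Einstein Miao-Tam metric, where $\ric \equiv 0$ and both sides vanish) as a sanity check.
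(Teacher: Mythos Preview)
Your plan is correct and is exactly the paper's approach: integrate Lemma~\ref{lemA1a} with $\kappa=1$ (the divergence vanishes since $f=0$ on $\partial M$), then process the two mixed terms by integration by parts. Two small corrections to your expectations: (i) no Ricci identity or commutation of covariant derivatives is ever needed --- the Weyl term $-2f^2W_{ijkl}\rc_{ik}\rc_{jl}$ is already present in Lemma~\ref{lemA1a} and passes through untouched, while the integration by parts on $\int_M f\nabla_i\rc_{jk}\,\rc_{ik}\nabla_jf\,dV_g$ (using $\nabla_i\rc_{ik}=0$ and $\nabla_i\nabla_j f=f\rc_{ij}+\tfrac{\Delta f}{n}g_{ij}$) only contributes $-\int|\ric(\nabla f)|^2-\int f^2\rc_{ij}\rc_{jk}\rc_{ki}-\tfrac{1}{n}\int f\Delta f|\ric|^2$, the cubic piece combining with the existing $\tfrac{2n}{n-2}$ to give $\tfrac{4}{n-2}$; (ii) the $|\ric|^2|\nabla f|^2$ term does not come from that Hessian substitution but from the \emph{other} mixed term, handled in the paper by integrating $\div(f|\ric|^2\nabla f)=f\Delta f|\ric|^2+f\langle\nabla|\ric|^2,\nabla f\rangle+|\ric|^2|\nabla f|^2$ over $M$.
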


\begin{proof} We start integrating the expression obtained in Lemma \ref{lemA1a} over $M$ and using that $f$ vanishes on the boundary $\partial M$ to achieve
\begin{eqnarray}\label{eq4}
0&=&\int_Mf^2|\nabla\ric|^2dV_g+\frac{n}{n-1}\int_Mf|\ric|^2dV_g+\frac{2R}{n-1}\int_Mf^2|\ric|^2dV_g\nonumber\\
 &&+2\int_Mf\nabla_i\rc_{jk}\rc_{ik}\nabla_jfdV_g-\frac{1}{2}\int_M\langle f\nabla|\ric|^2,\nabla f\rangle dV_g\nonumber\\
 &&-2\int_Mf^2W_{ijkl}\rc_{ik}\rc_{jl}dV_g+\frac{2n}{n-2}\int_Mf^2\rc_{ij}\rc_{jk}\rc_{ki}dV_g.
\end{eqnarray}

Easily one verifies that
\begin{eqnarray*}
\int_Mf\nabla_i\rc_{jk}\rc_{ik}\nabla_jfdV_g&=&\int_M\nabla_i(f\rc_{jk}\rc_{ik}\nabla_jf)dV_g-\int_M\rc_{jk}\rc_{ik}\nabla_if\nabla_jfdV_g\nonumber\\
 &&-\int_Mf\rc_{jk}\rc_{ik}\nabla_i\nabla_jfdV_g,
\end{eqnarray*} and this combined with (\ref{eqVstaic2}) yields
\begin{eqnarray*}
\int_Mf\nabla_i\rc_{jk}\rc_{ik}\nabla_jfdV_g &=&-\int_M|\ric(\nabla f)|^2dV_g-\int_Mf\rc_{jk}\rc_{ik}(f\rc_{ij}+\frac{\Delta f}{n}g_{ij})dV_g,
\end{eqnarray*} which can be rewritten as

\begin{equation}
\label{eq5}
\int_Mf\nabla_i\rc_{jk}\rc_{ik}\nabla_jfdV_g = -\int_M|\ric(\nabla f)|^2dV_g-\int_Mf^2\rc_{jk}\rc_{ik}\rc_{ij}dV_g -\frac{1}{n}\int_Mf\Delta f|\ric|^2dV_g.
\end{equation} Putting this into (\ref{eq4}) we conclude
\begin{eqnarray*}
0&=&\int_Mf^2|\nabla\ric|^2dV_g+\frac{n}{n-1}\int_Mf|\ric|^2dV_g+\frac{2R}{n-1}\int_Mf^2|\ric|^2dV_g\nonumber\\
 &&-2\int_M|\ric(\nabla f)|^2dV_g-2\int_Mf^2\rc_{jk}\rc_{ik}\rc_{ij}dV_g-\frac{2}{n}\int_Mf\Delta f|\ric|^2dV_g\nonumber\\
 &&-\frac{1}{2}\int_M\langle f\nabla|\ric|^2,\nabla f\rangle dV_g-2\int_Mf^2W_{ijkl}\rc_{ik}\rc_{jl}dV_g\nonumber\\
 &&+\frac{2n}{n-2}\int_Mf^2\rc_{ij}\rc_{jk}\rc_{ki}dV_g,
\end{eqnarray*} in other words,

\begin{eqnarray}\label{eq6}
 0&=&\int_Mf^2|\nabla\ric|^2dV_g+\frac{n}{n-1}\int_Mf|\ric|^2dV_g+\frac{2R}{n-1}\int_Mf^2|\ric|^2dV_g\nonumber\\
 &&-\frac{1}{2}\int_M\langle f\nabla|\ric|^2,\nabla f\rangle dV_g-2\int_M|\ric(\nabla f)|^2dV_g-\frac{2}{n}\int_Mf\Delta f|\ric|^2dV_g\nonumber\\
 &&-2\int_Mf^2W_{ijkl}\rc_{ik}\rc_{jl}dV_g+\frac{4}{n-2}\int_Mf^2\rc_{ij}\rc_{jk}\rc_{ki}dV_g.
\end{eqnarray}

To proceed, by a direct computation, we can check that

\begin{eqnarray*}
\div(f|\ric|^2\nabla f)&=&f\Delta f|\ric|^2+f\langle\nabla|\ric|^2,\nabla f\rangle+|\ric|^2|\nabla f|^2,
\end{eqnarray*} from which we deduce
\begin{eqnarray*}
0&=&\int_Mf\Delta f|\ric|^2dV_g+\int_Mf\langle\nabla|\ric|^2,\nabla f\rangle dV_g+\int_M|\ric|^2|\nabla f|^2dV_g.
\end{eqnarray*} Thus,
\begin{equation}
\label{eq7}
\int_M f\langle\nabla|\ric|^2,\nabla f\rangle dV_g=-\int_Mf\Delta f|\ric|^2dV_g-\int_M|\ric|^2|\nabla f|^2dV_g.
\end{equation} One notices that combining (\ref{eq7}) with (\ref{eq6}) we arrive at
\begin{eqnarray*}
\label{eq8}
0&=&\int_Mf^2|\nabla\ric|^2dV_g+\frac{n}{n-1}\int_Mf|\ric|^2dV_g+\frac{2R}{n-1}\int_Mf^2|\ric|^2 dV_g\nonumber\\
 &&+\frac{1}{2}\int_Mf\Delta f|\ric|^2 dV_g+\frac{1}{2}\int_M|\ric|^2|\nabla f|^2 dV_g -2\int_M|\ric(\nabla f)|^2dV_g\nonumber\\
 &&-\frac{2}{n}\int_Mf\Delta f|\ric|^2 dV_g-2\int_Mf^2W_{ijkl}\rc_{ik}\rc_{jl}dV_g\nonumber\\
 &&+\frac{4}{n-2}\int_Mf^2\rc_{ij}\rc_{jk}\rc_{ki}dV_g\nonumber\\
 &=&\int_M f^2|\nabla\ric|^2 dV_g+\frac{n}{n-1}\int_Mf|\ric|^2 dV_g+\frac{2R}{n-1}\int_Mf^2|\ric|^2 dV_g\nonumber\\
 &&+\frac{n-4}{2n}\int_Mf\Delta f|\ric|^2 dV_g+\frac{1}{2}\int_M|\ric|^2|\nabla f|^2 dV_g-2\int_M|\ric(\nabla f)|^2dV_g\nonumber\\
 &&-2\int_Mf^2W_{ijkl}\rc_{ik}\rc_{jl}dV_g+\frac{4}{n-2}\int_Mf^2\rc_{ij}\rc_{jk}\rc_{ki}dV_g,\nonumber
\end{eqnarray*} that was to be proved.

\end{proof}

\subsection{Proof of Theorem \ref{thmB}}
\begin{proof} Initially, we assume that $M^n$ is a compact Miao-Tam cri\-tical metric with positive scalar curvature and dimension $n\ge 4.$ Thus, we may use Lemma \ref{lemB1} jointly with the classical Kato inequality $|\nabla |\ric||\leq |\nabla \ric|$ to obtain
\begin{eqnarray}\label{eq9}
0&\geq&\int_Mf^2|\nabla|\ric||^2 dV_g+\frac{n}{n-1}\int_Mf|\ric|^2dV_g\nonumber\\
 &&+\frac{2R}{n-1}\int_Mf^2|\ric|^2 dV_g+\frac{n-4}{2n}\int_Mf\Delta f|\ric|^2 dV_g\nonumber\\
 &&+\frac{1}{2}\int_M|\ric|^2|\nabla f|^2 dV_g-2\int_M|\ric(\nabla f)|^2dV_g\nonumber\\
 &&-2\int_Mf^2W_{ijkl}\rc_{ik}\rc_{jl}dV_g+\frac{4}{n-2}\int_Mf^2\rc_{ij}\rc_{jk}\rc_{ki}dV_g.
\end{eqnarray}

On the other hand, by a result by Catino (cf. \cite{catinoAdv}, Proposition 2.1, see also \cite{bour}, Lemma 4.7), on every Riemannian manifold of dimension $n\geq 4$ the following estimate holds 

\begin{eqnarray*}
\Big|-W_{ijkl}\rc_{ik}\rc_{jl}+\frac{2}{n-2}\rc_{ij}\rc_{jk}\rc_{ik}\Big|\le \sqrt{\frac{n-2}{2(n-1)}}\Big(|W|^{2}+\frac{8}{n(n-2)}|\ric|^{2}\Big)^{\frac{1}{2}}|\ric|^{2}.
\end{eqnarray*} This employed into (\ref{eq9}) achieves

\begin{eqnarray}\label{eq10}
0&\geq&\int_Mf^2|\nabla|\ric||^2dV_g+\frac{n}{n-1}\int_Mf|\ric|^2dV_g+\frac{2R}{n-1}\int_Mf^2|\ric|^2dV_g\nonumber\\
 &&+\frac{n-4}{2n}\int_Mf\Delta f|\ric|^2 dV_g+\frac{1}{2}\int_M|\ric|^2|\nabla f|^2 dV_g-2\int_M|\ric(\nabla f)|^2dV_g\nonumber\\
 &&-\sqrt{\frac{2(n-2)}{n-1}}\int_Mf^2\left(|W|^2+\frac{8}{n(n-2)}|\ric|^2\right)^\frac{1}{2}|\ric|^2dV_g.
\end{eqnarray}

For what follows it is essential to remark that from (\ref{Yamabeconst}) we have 
\begin{eqnarray}
\label{desYamabe}
\frac{n-2}{4(n-1)}\mathcal{Y}(M,\partial M,[g])\left(\int_M|\phi|^\frac{2n}{n-2}dV_g\right)^\frac{n-2}{n}&\leq&\int_M|\nabla\phi|^2dV_g+\frac{n-2}{4(n-1)}\int_M R\phi^2dV_g\nonumber\\
 &&+\frac{n-2}{2(n-1)}\int_{\partial M}H\phi^2 dS_g,
\end{eqnarray} where $H$ is the mean curvature of $\partial M.$ Since $f^{-1}(0)=\partial M,$ we deduce that $f$ does not change sign. Then, we assume that $f$ is nonnegative. In particular, $f > 0$ at the interior of $M.$ Hence, by choosing $\phi=f|\ric|$ in (\ref{desYamabe}) and therefore, using that $f$ vanishes on the boundary and that $M$ has constant scalar curvature, we obtain
\begin{eqnarray*}
\int_M|\nabla(f|\ric|)|^2 dV_g&\geq&\frac{n-2}{4(n-1)}\mathcal{Y}(M,\partial M,[g])\left(\int_M f^\frac{2n}{n-2}|\ric|^\frac{2n}{n-2}dV_g\right)^\frac{n-2}{n}\\
 &&-\frac{(n-2)R}{4(n-1)}\int_M f^2|\ric|^2dV_g.
\end{eqnarray*} Moreover, taking into account that

\begin{eqnarray*}
|\nabla(f|\ric|)|^2&=&|f\nabla|\ric|+|\ric|\nabla f|^2\\
 &=&f^2|\nabla|\ric||^2+2f|\ric|\langle\nabla|\ric|,\nabla f\rangle+|\ric|^2|\nabla f|^2\\
 &=&f^2|\nabla|\ric||^2+f\langle\nabla|\ric|^2,\nabla f\rangle+|\ric|^2|\nabla f|^2,
\end{eqnarray*} we arrive at
\begin{eqnarray*}
\int_Mf^2|\nabla|\ric||^2 dV_g &\geq&\frac{n-2}{4(n-1)}\mathcal{Y}(M,\partial M,[g])\left(\int_Mf^\frac{2n}{n-2}|\ric|^\frac{2n}{n-2} dV_g\right)^\frac{n-2}{n}\nonumber\\
 &&-\frac{(n-2)R}{4(n-1)}\int_Mf^2|\ric|^2 dV_g-\int_M f\langle\nabla|\ric|^2,\nabla f\rangle dV_g\nonumber\\
 &&-\int_M|\ric|^2|\nabla f|^2 dV_g.
\end{eqnarray*} By using (\ref{eq7}) we have
\begin{eqnarray}\label{eq12}
\int_M f^2|\nabla|\ric||^2 dV_g &\geq&\frac{n-2}{4(n-1)}\mathcal{Y}(M,\partial M,[g])\left(\int_Mf^\frac{2n}{n-2}|\ric|^\frac{2n}{n-2}dV_g\right)^\frac{n-2}{n}\nonumber\\
 &&-\frac{(n-2)R}{4(n-1)}\int_Mf^2|\ric|^2dV_g+\int_Mf\Delta f|\ric|^2dV_g.
\end{eqnarray} Next, combining (\ref{eq10}) and (\ref{eq12}) we immediately obtain
\begin{eqnarray*}
0&\geq&\frac{n-2}{4(n-1)}\mathcal{Y}(M,\partial M,[g])\left(\int_Mf^\frac{2n}{n-2}|\ric|^\frac{2n}{n-2}dV_g\right)^\frac{n-2}{n}\\
 &&-\frac{(n-2)R}{4(n-1)}\int_Mf^2|\ric|^2dV_g+\int_Mf\Delta f|\ric|^2dV_g+\frac{n}{n-1}\int_Mf|\ric|^2dV_g\\
 &&+\frac{2R}{n-1}\int_Mf^2|\ric|^2dV_g+\frac{n-4}{2n}\int_Mf\Delta f|\ric|^2dV_g\\
 &&+\frac{1}{2}\int_M|\ric|^2|\nabla f|^2dV_g-2\int_M|\ric(\nabla f)|^2dV_g\\
 &&-\sqrt{\frac{2(n-2)}{n-1}}\int_Mf^2\left(|W|^2+\frac{8}{n(n-2)}|\ric|^2\right)^\frac{1}{2}|\ric|^2dV_g,
\end{eqnarray*} which can be rewritten as 

\begin{eqnarray*}
0&\geq&\frac{n-2}{4(n-1)}\mathcal{Y}(M,\partial M,[g])\left(\int_Mf^\frac{2n}{n-2}|\ric|^\frac{2n}{n-2}dV_g\right)^\frac{n-2}{n}+\frac{1}{2}\int_M|\ric|^2|\nabla f|^2dV_g\\
 &&-2\int_M|\ric(\nabla f)|^2dV_g+\frac{n}{n-1}\int_Mf|\ric|^2dV_g-\frac{(n-10)R}{4(n-1)}\int_Mf^2|\ric|^2dV_g\\
 &&-\sqrt{\frac{2(n-2)}{n-1}}\int_Mf^2\left(|W|^2+\frac{8}{n(n-2)}|\ric|^2\right)^\frac{1}{2}|\ric|^2dV_g\\
 &&+\frac{3n-4}{2n}\int_Mf\Delta f|\ric|^2dV_g,
\end{eqnarray*} and by Eq. (\ref{eqtrace}) for $\kappa=1,$ we get

\begin{eqnarray*}
0&\geq&\frac{n-2}{4(n-1)}\mathcal{Y}(M,\partial M,[g])\left(\int_Mf^\frac{2n}{n-2}|\ric|^\frac{2n}{n-2}dV_g\right)^\frac{n-2}{n}+\frac{1}{2}\int_M|\ric|^2|\nabla f|^2dV_g\nonumber\\
 &&-2\int_M|\ric(\nabla f)|^2dV_g+\frac{n}{n-1}\int_Mf|\ric|^2dV_g-\frac{(n-10)R}{4(n-1)}\int_Mf^2|\ric|^2dV_g\nonumber\\
 &&-\frac{(3n-4)R}{2n(n-1)}\int_Mf^2|\ric|^2dV_g-\frac{(3n-4)}{2(n-1)}\int_Mf|\ric|^2dV_g\nonumber\\
 &&-\sqrt{\frac{2(n-2)}{n-1}}\int_Mf^2\left(|W|^2+\frac{8}{n(n-2)}|\ric|^2\right)^\frac{1}{2}|\ric|^2dV_g.
\end{eqnarray*} From here we deduce

\begin{eqnarray}
\label{eq13}
0&\geq &\frac{n-2}{4(n-1)}\mathcal{Y}(M,\partial M,[g])\left(\int_Mf^\frac{2n}{n-2}|\ric|^\frac{2n}{n-2}dV_g\right)^\frac{n-2}{n}+\frac{1}{2}\int_M|\ric|^2|\nabla f|^2dV_g\nonumber\\
 &&-2\int_M|\ric(\nabla f)|^2dV_g-\frac{(n-4)}{2(n-1)}\int_Mf|\ric|^2dV_g\nonumber\\
 &&-\frac{(n^2-4n-8)R}{4n(n-1)}\int_Mf^2|\ric|^2dV_g\nonumber\\
 &&-\sqrt{\frac{2(n-2)}{n-1}}\int_Mf^2\left(|W|^2+\frac{8}{n(n-2)}|\ric|^2\right)^\frac{1}{2}|\ric|^2dV_g.
 \end{eqnarray} Next, the H\"older inequality implies that

\begin{eqnarray}
\label{eq14m}
0&\geq &\frac{n-2}{4(n-1)}\mathcal{Y}(M,\partial M,[g])\left(\int_Mf^\frac{2n}{n-2}|\ric|^\frac{2n}{n-2}dV_g\right)^\frac{n-2}{n}\nonumber\\ &-&\sqrt{\frac{2(n-2)}{n-1}}\left(\int_M\left(|W|^2+\frac{8}{n(n-2)}|\ric|^2\right)^\frac{n}{4}dV_g\right)^\frac{2}{n}\left(\int_Mf^\frac{2n}{n-2}|\ric|^\frac{2n}{n-2}dV_g\right)^\frac{n-2}{n}\nonumber\\
 &+&\frac{1}{2}\int_M|\ric|^2|\nabla f|^2dV_g-2\int_M|\ric(\nabla f)|^2dV_g-\frac{(n-4)}{2(n-1)}\int_Mf|\ric|^2dV_g \nonumber\\
 &-&\frac{(n^2-4n-8)R}{4n(n-1)}\int_M f^2|\ric|^2 dV_g.\nonumber\\
\end{eqnarray}

Now, we claim that
\begin{equation}\label{eq15}
|\ric(\nabla f)|^2\leq\frac{(n-1)\sqrt{2n}}{2n}|\ric|^2|\nabla f|^2.
\end{equation} To prove this claim, first, notice that a straightforward computation gives
 $$|\ric(\nabla f)|^2=-\frac{1}{8}((df\otimes df)\odot g)_{ijkl}(\ric\odot\ric)_{ijkl}$$ and $$|((df\otimes df)\odot g)_{ijkl}(\ric\odot\ric)_{ijkl}|^2\leq|(df\otimes df)\odot g|^2|\ric\odot\ric|^2,$$ where $\odot$ is the Kulkarni-Nomizu product. We further have $$|(df\otimes df)\odot g|^2=4(n-1)|\nabla f|^4$$ and $$|\ric\odot\ric|^2=8|\ric|^4-8|\ric^2|^2,$$ where $(\ric^2)_{ik}=(\ric)_{ip}(\ric)_{kp}.$ In particular, it is easy to check that $tr \ric^2=|\ric |^{2}$ and then we immediately have $|\ric^2|^{2}\geq \frac{|\ric|^{4}}{n},$ which allows to deduce that 
 $$|\ric\odot\ric|^2\leq \frac{8(n-1)}{n}|\ric|^4.$$ Whence, it follows that $$|((df\otimes df)\odot g)_{ijkl}(\ric\odot\ric)_{ijkl}|^2\leq \frac{32(n-1)^2}{n}|\ric|^4|\nabla f|^4.$$ Thereby, we immediately achieve
\begin{equation*}
|\ric(\nabla f)|^2\leq\frac{(n-1)\sqrt{2n}}{2n}|\ric|^2|\nabla f|^2,
\end{equation*} which settles our claim.

Substituting (\ref{eq15}) into (\ref{eq14m}) we get

\begin{eqnarray}
\label{eq12f}
 0&\geq &\frac{n-2}{4(n-1)}\mathcal{Y}(M,\partial M,[g])\left(\int_Mf^\frac{2n}{n-2}|\ric|^\frac{2n}{n-2}dV_g\right)^\frac{n-2}{n}\nonumber\\
  &&-\sqrt{\frac{2(n-2)}{n-1}}\left(\int_M\left(|W|^2+\frac{8}{n(n-2)}|\ric|^2\right)^\frac{n}{4}dV_g\right)^\frac{2}{n}\left(\int_Mf^\frac{2n}{n-2}|\ric|^\frac{2n}{n-2}dV_g\right)^\frac{n-2}{n}\nonumber\\&&-\frac{2(n-1)\sqrt{2n}-n}{2n}\int_M|\ric|^2|\nabla f|^2dV_g-\frac{(n-4)}{2(n-1)}\int_Mf|\ric|^2dV_g\nonumber\\
 &&-\frac{(n^2-4n-8)R}{4n(n-1)}\int_M f^2|\ric|^2 dV_g.\nonumber\\
\end{eqnarray} In particular, by choosing $n=4$ in the above expression and using that $f$ is nonnegative, we achieve
\begin{eqnarray}\label{desn4}
0&\geq&\left[\frac{1}{6}\mathcal{Y}(M,\partial M,[g])-\frac{2}{\sqrt{3}}\left(\int_M\left(|W|^2+|\ric|^2\right)dV_g\right)^\frac{1}{2}\right]\left(\int_Mf^4|\ric|^4dV_g\right)^\frac{1}{2}\nonumber\\
 &&-\frac{3\sqrt{2}-1}{2}\int_M|\ric|^2|\nabla f|^2dV_g+\frac{R}{6}\int_Mf^2|\ric|^2 dV_g,\nonumber\\
\end{eqnarray} which, since $R>0,$ gives the desired inequality (\ref{thmeq}). 

Proceeding, we suppose that (\ref{thmeq}) is actually an equality. Then from (\ref{desn4}), using that $R>0,$ we see that $$\int_Mf^2|\ric|^2dV_g=0.$$ Whence, we have $|\ric|^2=0$ and then $M^4$ is Einstein. Finally, we may invoke Theorem 1.1 in \cite{miaotamTAMS} to conclude that $M^4$ is isometric to a geodesic ball in $\mathbb{S}^4.$ 

So, the proof is completed.
\end{proof}

\begin{acknowledgement}
The authors want to thank the referee for careful rea\-ding, rele\-vant remarks and valuable suggestions. Moreover, they want to thank A. Barros, E. Barbosa and R. Batista for fruitful conversations about this subject.
\end{acknowledgement}

\end{document}